\title{{\large \textbf{}} }
\date{}
\newtheorem{Remark}{\bf{Remark}}
\newtheorem{Example}{\bf{Example}}
\newtheorem{Theorem}{\bf{Theorem}}
\newtheorem{Lemma}{\bf{Lemma}}
\newtheorem{Definition}{\bf{Definition}}
\title{FE-Holomorphic Operator Function Method for Nonlinear Plate Vibrations with Elastically Added Masses}
\author{Xiangying Pang \thanks{Beijing Computational Science Research Center, Beijing 100193, China. ({\tt xypang@csrc.ac.cn}).} \and
Jiguang Sun \thanks{Department of
Mathematical Sciences, Michigan Technological University, Houghton, MI 49931, USA. ({\tt  jiguangs@mtu.edu}).}
\and Zhimin Zhang \thanks{Beijing Computational Science Research Center, Beijing 100193, China and Department of Mathematics, Wayne State University, Detroit, MI 48202, USA. ({\tt zmzhang@csrc.ac.cn}). The research of Z. Zhang is supported partially by the National Natural Science Foundation of China grants NSFC 11871092, NSAF U1930402 and NSFC 11926356.}
}
\begin{document}
\maketitle

\begin{abstract}
Vibrations of structures subjected to concentrated point loads have many applications in mechanical engineering. Experiments are expensive and numerical methods are often used for simulations. In this paper, we consider the plate vibration with nonlinear dependence on the eigen-parameter. The problem is formulated as the eigenvalue problem of a holomorphic Fredholm operator function. The Bogner-Fox-Schmit element is used for the discretization and the spectral indicator method is employed to compute the eigenvalues. The convergence is proved using the abstract approximation theory of Karma  \cite{26, 27}. Numerical examples are presented for validations.	
\end{abstract}

\noindent{\bf Key words:} plate vibration, nonlinear eigenvalue problem, holomorphic Fredholm operator function, finite element

\section{Introduction}

We consider the numerical computation of the natural frequencies of a mechanical structure joined elastically with discrete masses \cite{17}. In structural engineering, the dynamic analysis of structure-spring-load systems, which describe the vibrations of structures such as shells and plates, has been an important research topic in the past a few decades \cite{19,21,8}. Experimental studies are usually expensive and sometimes prohibitive. Instead, numerical methods are often used to compute the vibration characteristics. 

In this paper, we consider the plate-spring-load systems. The contact surface between the oscillators and the plate is small enough and modeled by the $Dirac'$ $\delta$-distribution. 
The dependence on the eigenprarameter is nonlinear. As a consequence, numerical discretizations such as finite element methods or finite difference methods lead to nonlinear matrix eigenvalue problems (NLEVP). 
Many schemes have been proposed for the NLEVP. Iterative methods \cite{33, 5,1,3} such as the Arnoldi method and Jacobi-Davidson method project the original problem onto some subspaces in which eigenvalues are computed as approximations. Linearization techniques transform the problem into an equivalent linear eigenvalue problem \cite{7,28}. These methods may increase the size of the problem, make the eigenvalues more sensitive to perturbations, or even destroy the symmetry structure. For iterative methods, it is difficult to obtain a good initial guess. 

Compared to the linear cases, convergence analysis of numerical methods for the nonlinear eigenvalue problems associated with plate vibrations is much less developed. 
To treat the nonlinearity, most literatures construct a continuous function and a fitting function 
and the eigenvalues are the intersections of these two functions. 
In this paper, a new computational approach is proposed and its convergence is analyzed. 
We first convert the problem into the eigenvalue problem of a holomorphic Fredholm operator function. Then the Bogner-Fox-Schmit (BFS) finite element is used to discretize the operators. A spectral indicator method is developed to practically compute the eigenvalues. The convergence of the discrete eigenvalues is proved using the abstract approximation theory of Karma \cite{26, 27}.

The rest of the paper is organized as follows. In Section 2, we present the model problem and the abstract approximation theory for the eigenvalue problems of holomorphic Fredholm operator functions. In Section 3, the BFS element is employed to discretize the operators and the convergence of the eigenvalue problem is proved. 
The end of Section 3 is a description of the spectral indicator method to compute the eigenvalues. 
Numerical experiments are presented in Section 4. Finally, we draw some conclusions in Section 5. 

\section{Model Problem and Preliminaries}
We consider the plate-spring-load model for the flexural vibration of a thin plate elastically combined with added masses. Assume that the masses are joined by elastic springs with stiffness coefficient $K_j$ at point $x_j \in D,$ where $D \subset \mathbb R^2$ is a Lipschitz polygon. The $j$th pair of mass and spring forms a harmonic oscillator with the vibration frequency $ v_j=\sqrt{K_j /M_j}$, where $M_j$ is the mass. The $\delta$-distribution represents the forcing term. Denote by $\omega(x,t)$ the vertical deflection of the plate at $x\in D$ at time $t$ and $\zeta_j(t) $ the vertical displacement of the $j$th mass $M_j$ at time $t$. The motion of the plate and masses satisfies the following equation \cite{17}:
\begin{subequations}{\label{6}}
	\begin{align}
      Lw(x,t)+\rho d w_{tt}(x,t)+\sum_{j=1}^{p} M_j(\zeta_j)_{tt}\delta(x-x_j) &= 0\quad \text{ in } D, \, x_j \in D ,\\[1mm]
      Bw(x,t)&=0 \quad \text{ on } \partial D, \, \\[1mm]
       M_j(\zeta_j)_{tt}+K_j(\zeta_j(t)-w(x_j,t))&=0\quad \text{for } t>0, \, j=1,...,p, 
        \end{align}
%
\end{subequations}
where $L$ is the plate operator and $B$ is some boundary operator. 
The parameters $\rho(x),\nu(x),$ $E(x),d(x)$ denote the volume mass density, Possion ratio, Young's modulus, thickness of the plate, respectively, 
such that 
\begin{displaymath}
	K_j>0, \, M_j>0 , \, 0< \nu(x) <1/2 , 
\end{displaymath} 
\begin{displaymath}
	|E(x)| \leq C , \, |\rho(x) | \leq C, \, | d(x) | \leq C,
\end{displaymath}
where $C$ is a constant. 
The plate operator $L$ is given by
\begin{equation}
	L(x)=\partial_{xx}R(x)(\partial_{xx}+\nu (x) \partial_{yy}) +\partial_{yy} R(x)(\partial_{yy}+\nu(x) \partial_{xx}) +2 \partial_{xy} R(x)(1-\nu(x))\partial_{xy},
\end{equation}
where $R(x)=Ed^3/{12(1-\nu^2)}$ is the flexural rigidity of the material.

 The eigenvibrations of this system are characterized by the ansatz:
\begin{equation}{\label{2}}
	w(x,t) = u(x) v(t) ,\, \zeta_j(t) = c_j u(x_j) v(t), \, x \in D, \, j= 1, \dots,p, 
\end{equation}
where $v(t)=a_0 cos(\sqrt{\lambda}t)+b_0 sin(\sqrt{\lambda}t )\,,t>0,$ and $ a_0,\, b_0,\, c_j$ are constants.
Using (\ref{2}) in (\ref{6}), we obtain the nonlinear eigenvalue problem to
find $\lambda \in \mathbb{C} \setminus \cup_j \{ \sigma_j \}$ and nontrivial function $u(x)$ such that 
\begin{eqnarray}
\label{Lux}	 &&Lu(x)=\lambda\rho d u + \sum_{j=1}^p \frac{\lambda \sigma_j}{\sigma_j - \lambda} M_j \delta(x-x_j)u \quad \text{ in } D ,\\
\label{Bux} &&Bu(x)=0 \quad  \text{ on } \partial D,
\end{eqnarray}
where $\sigma_j = {v_j}^2=K_j/M_j$.
In the rest of the paper, we use the clamped plate condition, i.e., $u = \frac{\partial u}{\partial n} =0$, where $n$ is the unit outward normal to $\partial D$.
Without loss of generality, we set $p=1$ and define the bilinear forms
\begin{eqnarray}
\nonumber a( u, v)&=&\int_{D} R(x) [ (\partial_{xx} u +\partial_{yy} u  )(\partial_{xx} v +\partial_{yy} v) \\
&& \label{auv} \qquad \qquad + (1-\nu )(2 \partial_{xy} u \partial_{xy} v - \partial_{xx} u \partial_{yy} v -\partial_{yy} u \partial_{xx} v)] dx, \\
\label{buvlambda}b(u, v;\lambda)&=&\lambda \int_{D} \rho d u v dx - \frac{\lambda \sigma}{\lambda-\sigma} M u(x_0)v(x_0), \quad x_0 \in D.
\end{eqnarray}

The variational formulation for the eigenvalue problem is to find $\lambda \in \mathbb{C} \setminus \{ \sigma \}$ and nontrivial $u \in H^2_0(D)$ such that
\begin{equation}{\label{1}}
a(u,v)= b(u,v;\lambda),\quad \forall v \in H^2_0(D).
\end{equation}

The existence of a countable set of real eigenvalues for \eqref{1} is proved in \cite{17} using an auxiliary parameter eigenvalue problem and a nonlinear algebraic equation. 
In this paper, we shall take a different approach by writing \eqref{1} as the eigenvalue problem of a holomorphic Fredholm operator function, which is then discretized
by a finite element method. The convergence of the eigenvalues is then proved using the abstract approximation theory of Karma \cite{26,27}.

Now we present some preliminaries from  \cite{26,27}.
The materials are adapted for the finite element method we shall use to discretize \eqref{1}.

 \begin{Definition}
 	Let $V,\, W$ be Banach spaces. A bounded linear operator $F \in \mathcal{L}(V,W) $ is called Fredholm with index zero if
	\begin{itemize}
 	\item[1.]  the range of $F$, denoted by $\mathcal{R}(F)$, is closed and $codim \mathcal{R}(F) := dim (Y/{\mathcal{R}(F))} $ is finite;
 	\item[2.]  the null space of $F$, denoted by $\mathcal{N} (F) $, is finite-dimensional; and
 	\item[3.]  the Fredholm index, defined as $ ind(F) = dim \mathcal{N} (F) - codim \mathcal{R} (F)$, is zero.
 	\end{itemize}
\end{Definition}

\begin{Definition}
	(ref. \cite{26}) Let $V $ and $V_n, \, n \in \mathbb{N},$ be Banach spaces. A sequence of linear operators $\mathcal{P} := \{p_n: V \to V_n\}_{n \in \mathbb{N}}$ connecting them is such that $ \Vert p_n u \Vert_{V_n} \to \Vert u \Vert_V$ for all $u \in V$. 
\end{Definition}
A sequence $\{v_n\}_{n \in \mathbb{N}}$, $v_n \in V_n$, is called $\mathcal{P}$-converging to an element $u \in V$, denoted by $v_n \stackrel{\mathcal{P} }{\to} u $, if $\Vert p_n u-v_n \Vert_{V_n} \to 0, n \to \infty$.
 A sequence $\{v_n\}_{n \in \mathbb{N}}$, $v_n \in V_n$, is called $\mathcal{P}$-compact if for every subsequence $\{v_n\}_{n \in \mathbb{N}'}, \, \mathbb{N}' \subseteq \mathbb{N},$ there exists $ \mathbb{N}'' \subseteq \mathbb{N}'$ and $ u \in V$ such that $v_n \stackrel{\mathcal{P} }{\to} u,\, n \in \mathbb{N}'' \to \infty$.

Let $F: \Omega \to \mathcal{L}(V, W)$ be a holomorphic operator function on $\Omega \subset \mathbb C$ and, for each $\eta \in \Omega$, 
$F(\eta)$ is a Fredholm operator of index zero. 
\begin{Definition} A complex number $\lambda \in \Omega$ is called an eigenvalue
of $F$ if there exists a nontrivial $x \in V$ such that $F(\lambda) x = 0$. The element $x$ is called an eigenelement associated with $\lambda$.
\end{Definition}
The resolvent set $\rho(F)$ and the spectrum $\sigma(F) $ of $F$ are respectively defined as
\begin{equation}\label{rhoT}
\rho(F) = \{\eta \in \Omega: F(\eta)^{-1} \text{ exists and is bounded}\}
\end{equation}
and
\begin{equation}\label{sigmaT}
\sigma(F) = \Omega \setminus \rho(F).
\end{equation}

Since $F(\eta)$ is holomorphic, the spectrum $\sigma(F)$ has no cluster points in $\Omega$ and every $\lambda \in \sigma(F)$ is
an eigenvalue for $F$. Furthermore,
$F^{-1}(\cdot)$ is meromorphic (see Section 2.3 of \cite{27}).
The dimension of $\mathcal{N}(F(\lambda))$ is called the 
geometric multiplicity for an eigenvalue $\lambda$. 

\begin{Definition}
An ordered sequence of elements $x_0, x_1, \ldots, x_k$ in $V$ is called a Jordan
chain of $F$ at an eigenvalue $\lambda$ if
\[
F(\lambda)x_j + \frac{1}{1!} F^{(1)}(\lambda)x_{j-1}+ \ldots + \frac{1}{j!} F^{(j)}(\lambda) x_0 = 0, \quad j = 0, 1, \ldots, k,
\]
where $F^{(j)}(\lambda)$ denotes the $j$th derivative of $F(\lambda)$. 
\end{Definition}

The length of any Jordan chain of an eigenvalue is finite.
Denote by $m(F,\lambda, x_0)$ the length of a Jordan chain formed by an eigenelement $x_0$.
The maximal length of all Jordan chains of the eigenvalue $\lambda$ is denoted by $\kappa(F, \lambda)$.
Elements of any Jordan chain of an eigenvalue $\lambda$ are called generalized eigenelements of $\lambda$.
\begin{Definition}
The closed linear hull of all generalized eigenelements of $F(\cdot)$ at an eigenvalue $\lambda$, denoted by $G(F,\lambda)$,
is called the generalized eigenspace of $\lambda$ for $F(\cdot)$.
\end{Definition}
The following abstract convergence theorem was proved in \cite{27}.
	\begin{Theorem}{\label{Karma}}
		Let $\Omega \subset \mathbb C $ be open, bounded, and simply connected. 
		Denoted by $V$ and $\{V_n \}_{n \in \mathbb{N}}$ a Banach space and a sequence of Banach spaces, respectively. 
		They are connected by a sequence of linear operators $\mathcal{P} := \{p_n: V \to V_n\}_{n \in \mathbb{N}}$ 
		satisfying $ \Vert p_n u \Vert_{V_n} \to \Vert u \Vert_V, \forall u \in V $. Let $F(\cdot): \Omega \to \mathcal{L}(V,V)$ and 
		$F_n(\cdot): \Omega \to \mathcal{L}(V_n,V_n),\, n \in \mathbb{N},$ be holomorphic Fredholm operator functions. 
		Assume $\rho(F) \ne \emptyset$.
Let the following properties be satisfied:
		\begin{itemize}
		\item[(A1)] the operator $F_n(\lambda)$ is a Fredholm operator with index zero for any $\lambda \in \Omega$ and $n\in \mathbb{N}.$
		\item[(A2)] the sequence $\{F_n(\cdot)\}_{n \in \mathbb{N}}$ is equibounded on every compact set $\Omega' \subset \Omega, \,$ namely,
		for a compact set $\Omega' \subset \Omega$, there exists a constant C such that $\Vert F_n(\lambda) \Vert_{\mathcal{L}(V_n,V_n)}  \leq C$ for all
		$n\in \mathbb{N}$ and all $\lambda \in \Omega'$.
		\item[(A3)] for each $\lambda \in \Omega$, the sequence $\{F_n(\lambda)\}_{n \in \mathbb{N}}$ approximates $F(\lambda)$, i.e.,
		\begin{equation}
			\Vert (F_n(\lambda)p_n-p_n F(\lambda)) u \Vert_{V_n} \to 0,  \, \forall  u \in V.
		\end{equation}
		\item[(A4)] the sequence $\{F_n(\lambda)\}_{n \in \mathbb{N}}$ is regular for each $\lambda \in \Omega$, i.e., 
		\begin{equation}
			\{F_n(\lambda)x_n\}_{n \in \mathbb{N}} \textrm{ is $ \mathcal{P-}compact$ } \Rightarrow  \{ x_n\}_{n \in \mathbb{N}} \textrm{ is $ \mathcal{P-}compact$ }, \, \forall \lambda \in \Omega.
		\end{equation}
		\end{itemize}
		If $\lambda_0$ be an eigenvalue of $F(\lambda)$ and $\Omega_0 \subset \Omega$ be a simply-connected compact set  with boundary $\partial \Omega_0 \subset \rho(F) $ and $ \Omega_0 \cap \sigma (F) = \{ \lambda_0 \} $, then the following estimation holds 
		\begin{equation}
			\vert \lambda_n -\lambda_0 \vert \leq c \epsilon_n^{1/\kappa} \to 0, \, \forall \lambda_n \in \sigma(F_n) \cap \Omega_0, 
		\end{equation}
		where $ 	\epsilon_n = \sup_{\eta \in \Gamma} \max_{g \in G(F,\lambda_0), \, \Vert g \Vert_V=1} \Vert F_n(\eta)p_n g - p_n F(\eta)g \Vert_{V_n} ,$ and $ \kappa = \kappa(F,\lambda_0)$. 
		
	\end{Theorem}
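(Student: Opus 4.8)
The plan is to follow the approach of Karma \cite{26,27}. The rate estimate rests on three facts: (i) the discrete operators $F_n(z)$ are boundedly invertible on the contour $\Gamma := \partial\Omega_0$, uniformly in $n$, for $n$ large; (ii) every $\lambda_n \in \sigma(F_n)\cap\Omega_0$ converges to $\lambda_0$ and the total algebraic multiplicity of $\sigma(F_n)\cap\Omega_0$ equals that of $\lambda_0$; and (iii) a quantitative estimate placing the discrete eigenelements $O(\epsilon_n)$-close to $p_n G(F,\lambda_0)$. The exponent $1/\kappa$ then emerges from a standard fact from the spectral theory of holomorphic operator functions: $F(\cdot)^{-1}$, which is meromorphic on $\Omega$, has a pole of order exactly $\kappa = \kappa(F,\lambda_0)$ at $\lambda_0$, so $\|F(z)^{-1}\|_{\mathcal L(V,V)} \le C|z-\lambda_0|^{-\kappa}$ on a punctured neighborhood of $\lambda_0$.

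For (i), since each $F_n(z)$ is Fredholm of index zero by (A1), it suffices to exclude near-kernels uniformly on $\Gamma$. If this failed there would be $z_{n}\in\Gamma$ and $x_n\in V_n$ with $\|x_n\|_{V_n}=1$ and $F_n(z_n)x_n\to 0$ along a subsequence; by compactness of $\Gamma$ we may assume $z_n\to z_*\in\Gamma\subset\rho(F)$. Holomorphy of $F_n(\cdot)$ together with the equiboundedness (A2) yields, through Cauchy's estimates, equicontinuity of $\{F_n(\cdot)\}$ near $\Gamma$; combining this with (A3), extracting a $\mathcal P$-convergent subsequence $x_n\stackrel{\mathcal P}{\to}x$ via the regularity (A4), and passing to the limit gives $\|x\|_V=1$ and $F(z_*)x=0$, contradicting $z_*\in\rho(F)$. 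Thus $F_n(z)$ is injective, hence (being Fredholm of index zero) boundedly invertible on $\Gamma$ for $n$ large; the uniform bound $\sup_{z\in\Gamma}\|F_n(z)^{-1}\|\le M$ follows from an identical contradiction argument.

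For (ii), introduce the contour integrals $E=\frac{1}{2\pi\mathrm i}\oint_\Gamma F(z)^{-1}F'(z)\,\mathrm dz$ and $E_n=\frac{1}{2\pi\mathrm i}\oint_\Gamma F_n(z)^{-1}F_n'(z)\,\mathrm dz$, defined for $n$ large. By the argument principle for holomorphic Fredholm operator functions, $\mathrm{rank}\,E$ equals the algebraic multiplicity of $\lambda_0$ and is finite; the uniform bound $M$ together with (A3) shows that $E_n$ approximates $E$ in operator norm (in the discrete sense) and that $\mathrm{rank}\,E_n=\mathrm{rank}\,E$ for $n$ large, so in particular $\sigma(F_n)\cap\Omega_0\ne\emptyset$. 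If some sequence $\lambda_n\in\sigma(F_n)\cap\Omega_0$ did not converge to $\lambda_0$, then along a subsequence it would lie in a compact subset of $\Omega_0\setminus\{\lambda_0\}\subset\rho(F)$; replacing $\Gamma$ by a small circle $\gamma\subset\rho(F)$ around a limit point, the same rank argument (now with $\mathrm{rank}\,E=0$ inside $\gamma$) would force $F_n$ to have no eigenvalue inside $\gamma$, a contradiction. Hence $\lambda_n\to\lambda_0$, and in particular $\mathrm{dist}(\lambda_n,\Gamma)\ge c>0$ for $n$ large.

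It remains to prove (iii) and assemble the estimate. Fix $\lambda_n\in\sigma(F_n)\cap\Omega_0$ and $x_n\in V_n$ with $\|x_n\|_{V_n}=1$ and $F_n(\lambda_n)x_n=0$; then $x_n$ belongs to the discrete generalized eigenspace $\mathcal R(E_n)$. A refined analysis of the contour integrals — tested only against $G(F,\lambda_0)$, which is where the bulk of the work lies — produces $g_n\in G(F,\lambda_0)$ with $\|g_n\|_V$ bounded above and below by positive constants and
\begin{equation}
\|x_n - p_n g_n\|_{V_n}\le C\epsilon_n .
\end{equation}
Since $z\mapsto F_n(z)p_n g_n - p_n F(z)g_n$ is holomorphic on $\Omega_0$ and $\mathrm{dist}(\lambda_n,\Gamma)\ge c$, Cauchy's integral formula and the definition of $\epsilon_n$ give $\|F_n(\lambda_n)p_n g_n - p_n F(\lambda_n)g_n\|_{V_n}\le C\epsilon_n$; combining this with $F_n(\lambda_n)x_n=0$ and the equiboundedness (A2),
\begin{equation}
\|p_n F(\lambda_n)g_n\|_{V_n}\le \|F_n(\lambda_n)(x_n-p_ng_n)\|_{V_n}+\|F_n(\lambda_n)p_ng_n-p_nF(\lambda_n)g_n\|_{V_n}\le C\epsilon_n ,
\end{equation}
and hence $\|F(\lambda_n)g_n\|_V\le C\epsilon_n$ for $n$ large, using $\|p_n v\|_{V_n}\to\|v\|_V$. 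If $\lambda_n\ne\lambda_0$, applying $F(\lambda_n)^{-1}$ and the pole bound yields $0<c\le\|g_n\|_V\le\|F(\lambda_n)^{-1}\|\,C\epsilon_n\le C|\lambda_n-\lambda_0|^{-\kappa}\epsilon_n$, i.e. $|\lambda_n-\lambda_0|\le c\,\epsilon_n^{1/\kappa}$, and the bound is trivial when $\lambda_n=\lambda_0$. The main obstacle is step (iii): establishing the $O(\epsilon_n)$ gap between $\mathcal R(E_n)$ and $p_n G(F,\lambda_0)$ requires a careful Laurent expansion of $F_n(\cdot)^{-1}$ near $\lambda_0$ and cannot be read off from the purely qualitative hypotheses (A1)--(A4); this is the technical heart of \cite{26,27}.
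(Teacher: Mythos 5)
The first thing to say is that the paper does not prove Theorem~\ref{Karma} at all: it is imported verbatim as the abstract approximation result of Karma, with the proof deferred to \cite{27} (``The following abstract convergence theorem was proved in \cite{27}''). So your attempt cannot be measured against an in-paper argument; it can only be judged as a self-contained proof and against what the cited source actually does.

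On those terms it contains a genuine gap, which you yourself flag: step (iii), the estimate $\Vert x_n - p_n g_n\Vert_{V_n}\le C\epsilon_n$ placing the discrete eigenelements within $O(\epsilon_n)$ of $p_n G(F,\lambda_0)$, is asserted (``a refined analysis of the contour integrals \dots\ produces $g_n$'') but never derived, and everything downstream --- the bound $\Vert F(\lambda_n)g_n\Vert_V\le C\epsilon_n$ and hence the final rate --- hangs on it. This is precisely the part that consumes most of \cite{26,27}: it needs discrete resolvent bounds near $\lambda_0$ (not merely on $\Gamma$) and a Laurent/Jordan-chain analysis of $F_n(\cdot)^{-1}$, none of which can be read off from (A1)--(A4) qualitatively, as you note. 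Steps (i) and (ii) are correctly sketched and do follow from (A1)--(A4) by the stability/contradiction arguments you give. Two smaller defects: (a) the passage from $\Vert p_nF(\lambda_n)g_n\Vert_{V_n}\le C\epsilon_n$ to $\Vert F(\lambda_n)g_n\Vert_V\le C\epsilon_n$ invokes $\Vert p_nv\Vert_{V_n}\to\Vert v\Vert_V$ for a $v$ that changes with $n$; this is only legitimate because $F(\lambda)g$ ranges over a compact subset of $V$ (finite-dimensionality of $G(F,\lambda_0)$, compactness of $\Omega_0$, and uniform boundedness of $\{p_n\}$ via Banach--Steinhaus give uniform convergence of the norms there), and that needs to be said; (b) in the generalized argument principle it is the trace, not in general the rank, of $\frac{1}{2\pi i}\oint_\Gamma F(z)^{-1}F'(z)\,dz$ that counts the algebraic multiplicity inside $\Gamma$. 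The overall architecture --- uniform invertibility of $F_n$ on $\Gamma$, localization of $\sigma(F_n)\cap\Omega_0$ near $\lambda_0$, $O(\epsilon_n)$ closeness to $p_nG(F,\lambda_0)$, and the pole of order $\kappa$ of $F(\cdot)^{-1}$ supplying the exponent $1/\kappa$ --- is the right one and matches the strategy of the cited source, but as written the proof is an outline with its technical core left as a pointer to \cite{26,27}.
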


\section{FE-Holomorphic Operator Function Method}

We now employ the $C^1$ BFS finite element \cite{12} to discretize (\ref{1}) and analyze the convergence of the eigenvalues.
For simplicity, we assume that $D$ is a Lipschitz polygon whose boundary segments are parallel to x- or y- axis. 
Let $\Omega$ be a connected compact set in $\mathbb{C} \setminus \{ \sigma \}$.
Let $\|\cdot\|$ denote the usual $L_2$-norm on a Sobolev space. 
Note that $a(\cdot,\cdot)$ defines an inner product $(w,u)_V=a(w,u)$ on $V:=H_0^2(D)$. The associated energy norm 
$\Vert u \Vert_V = \sqrt{a(u,u)} $ is equivalent to the usual Sobolev norm defined on $H_0^2(D)$ (see, e.g., \cite{29}). 
Let $\mathcal{T}_h$ be a regular rectangular mesh where $h$ is the mesh size. 
Denote $V_h$ the corresponding BFS finite element space satisfying the boundary conditions. 
We require that $x_0$ coincides with some mesh node.

The discrete problem for (\ref{1}) is to
find $\lambda_h \in \Omega $ and nontrivial functions $u_h \in V_h $ such that
\begin{equation}{\label{discrete}}
	a(u_h,v_h)= b(u_h,v_h;\lambda_h), \quad \forall v_h \in V_h.
\end{equation}

\begin{Lemma}
	For a fixed $\lambda \in \Omega$, $ b(\cdot,\cdot;\lambda)$ is a bounded bilinear form on $V \times V$.  
\end{Lemma}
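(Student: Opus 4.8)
The plan is to verify the two defining properties of a bounded bilinear form in turn. Bilinearity is immediate from the definition \eqref{buvlambda}: both the integral term $\lambda\int_D\rho d\,uv\,dx$ and the point-evaluation term $\frac{\lambda\sigma}{\lambda-\sigma}M u(x_0)v(x_0)$ are separately linear in $u$ and in $v$. The substance of the lemma is therefore the estimate $|b(u,v;\lambda)|\le C\|u\|_V\|v\|_V$ for a constant $C$ that may depend on the fixed $\lambda$.

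For the first term I would use the assumed bounds $|\rho(x)|\le C$ and $|d(x)|\le C$ together with the Cauchy--Schwarz inequality to obtain $\bigl|\lambda\int_D\rho d\,uv\,dx\bigr|\le |\lambda|\,C^2\,\|u\|\,\|v\|$, and then invoke the continuous embedding $H_0^2(D)\hookrightarrow L^2(D)$ (a Poincar\'e-type inequality) together with the equivalence of the energy norm $\|\cdot\|_V$ with the usual $H^2$-norm, as already recorded in the text, to replace the $L^2$-norms by $\|\cdot\|_V$.

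For the second term, the scalar $\frac{\lambda\sigma}{\lambda-\sigma}$ is a fixed finite complex number since $\lambda\in\Omega\subset\mathbb C\setminus\{\sigma\}$. The key point is that point evaluation $u\mapsto u(x_0)$ is a bounded linear functional on $V$: because $D\subset\mathbb R^2$ is a bounded Lipschitz domain, the Sobolev embedding theorem gives $H^2(D)\hookrightarrow C(\overline D)$, so $|u(x_0)|\le\|u\|_{C(\overline D)}\le C_S\|u\|_{H^2(D)}\le C_S'\|u\|_V$, and likewise $|v(x_0)|\le C_S'\|v\|_V$. Combining this with the estimate of the first term yields $|b(u,v;\lambda)|\le C(\lambda)\|u\|_V\|v\|_V$, which is the claim.

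The only nontrivial ingredient is the Sobolev embedding $H^2(D)\hookrightarrow C(\overline D)$ valid in two space dimensions; this is precisely what makes the Dirac-$\delta$ forcing, and hence the point-load term, meaningful in the variational setting, and I expect it to be the main (indeed the only) obstacle, the remaining steps being a routine use of Cauchy--Schwarz and norm equivalence. I would also remark that $C(\lambda)$ degenerates as $\lambda\to\sigma$, but uniform boundedness over compact subsets of $\Omega$ is automatic since $\Omega$ is assumed to be a compact subset of $\mathbb C\setminus\{\sigma\}$, which is what will be needed later for condition (A2) of Theorem \ref{Karma}.
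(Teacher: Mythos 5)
Your proposal is correct and follows essentially the same route as the paper: Cauchy--Schwarz plus the coefficient bounds for the integral term, a Sobolev embedding into continuous functions to control the point evaluations at $x_0$, and the equivalence of the energy norm with the $H^2$-norm. The only cosmetic difference is that the paper invokes the slightly sharper embedding $H^{1+s}(D)\hookrightarrow C(D)$ for small $s>0$ (which it reuses later to derive the convergence rate in Lemma~\ref{order}) rather than $H^{2}(D)\hookrightarrow C(\overline D)$, but for this lemma the two are interchangeable.
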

\begin{proof}
Clearly, $ b(\cdot,\cdot;\lambda)$ is a bilinear form for $\lambda \in \Omega$. For $u,v \in V$ and $ \lambda \in \Omega$, for $s>0$, we have that
\begin{eqnarray} \nonumber
\vert b(u,v;\lambda) \vert  & = & \left\vert \lambda \int_{D} \rho d u v dx - \frac{\lambda \sigma}{\lambda-\sigma} M u(x_0)v(x_0) \right\vert \\
\nonumber		& \leq & \lambda \rho d \Vert u \Vert \Vert v \Vert + \left\vert \frac{\lambda \sigma M }{\lambda-\sigma} \right\vert \Vert u \Vert_{0} \Vert u \Vert_{0}\,\\
\nonumber			& \leq & C(\lambda,D, \rho,d,\sigma,M) \Vert u \Vert_{1+s} \Vert v \Vert_{1+s}\\
\label{bound}			& \leq & C(\lambda,D, \rho,d,\sigma,M) \Vert u \Vert_{V} \Vert v \Vert_{V},
\end{eqnarray}
where $\Vert \cdot \Vert_{0}$ is the norm on $C(D)$ and
the embedding $ H^{1+s}(D) \hookrightarrow C(D)$ for any small $s>0$ is used. 
\end{proof}

We define the operator functions $B(\lambda): \mathbb{C} \setminus \{ \sigma \} \rightarrow \mathcal{L}(V, V)$ such that 
\begin{equation}{\label{B}}
  b(u,v;\lambda)=(B(\lambda)u,v)_V , \quad \forall u,v \in V,
\end{equation}
and $B_h(\lambda): \mathbb{C} \rightarrow \mathcal{L}(V_h, V_h) $ such that
\begin{equation}{\label{Bh}}
    b(u_h,v_h;\lambda)=(B_h(\lambda)u_h,v_h)_V ,\quad \forall u_h,v_h \in V_h .
\end{equation}
Since $V_h \subset V$, the following Galerkin orthogonality holds 
\begin{equation}
((B(\lambda) - B_h(\lambda)) u_h,v_h)_V =0 , \quad \forall u_h,v_h \in V_h.
\end{equation}

Now we define the nonlinear operator function
\begin{equation}\label{F}
F(\lambda):=I-B(\lambda)
\end{equation} 
and its finite element approximation
\begin{equation}\label{Fh}
F_h(\lambda):=I_h-B_h(\lambda).
\end{equation}

The eigenvalue problems (\ref{1}) and (\ref{discrete}) can be written, respectively, as to
find $\lambda \in \mathbb{C} \setminus \{ \sigma \}$ and $u \in V \setminus \{0\}$ such that
\begin{equation}{\label{trans1}}
	F(\lambda)u=0
\end{equation} 
and find $\lambda_h \in \Omega$ and $u_h \in V_h\setminus \{0\}$ such that
\begin{equation}{\label{trans2}}	F_h(\lambda_h)u_h=0.
\end{equation}

\begin{Lemma}
For $\lambda \in \Omega$ and $h >0$ small enough, $B(\lambda) $ and $B_h(\lambda)$ are compact. 
Furthermore, $F(\lambda)$ and $F_h(\lambda)$ are Fredholm operators with index zero.
\end{Lemma}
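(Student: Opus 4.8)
The plan is to prove compactness of $B(\lambda)$ and $B_h(\lambda)$ first, and then deduce the Fredholm property of $F(\lambda)=I-B(\lambda)$ and $F_h(\lambda)=I_h-B_h(\lambda)$ from the Riesz--Schauder theory. For the compactness of $B(\lambda)$, I would argue directly from the definition \eqref{B}. Writing $b(u,v;\lambda)$ as the sum of two terms, the first term $\lambda\int_D \rho d\, uv\,dx$ defines, via Riesz representation in $(\cdot,\cdot)_V$, an operator that factors as $V \hookrightarrow L^2(D) \to L^2(D) \to V$, where the first inclusion is compact by the Rellich--Kondrachov theorem (since $H^2_0(D)\hookrightarrow L^2(D)$ is compact on the bounded Lipschitz domain $D$); multiplication by the bounded function $\rho d$ is continuous on $L^2(D)$, and the final map into $V$ is bounded. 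A composition of a bounded operator with a compact one is compact. The second term $-\frac{\lambda\sigma}{\lambda-\sigma}M\,u(x_0)v(x_0)$ is a rank-one functional: it factors through the point-evaluation $u\mapsto u(x_0)$, which is bounded on $V$ by the embedding $H^{1+s}(D)\hookrightarrow C(D)$ used in the previous lemma. A rank-one (hence finite-rank) operator is compact, and the sum of two compact operators is compact. Therefore $B(\lambda)$ is compact for every $\lambda\in\Omega$.

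For $B_h(\lambda)$, the quickest route is to observe that $B_h(\lambda)$ has finite rank since $V_h$ is finite-dimensional, so it is trivially compact; alternatively one notes $B_h(\lambda) = \Pi_h B(\lambda)|_{V_h}$ where $\Pi_h: V\to V_h$ is the $(\cdot,\cdot)_V$-orthogonal projection (this is exactly the Galerkin orthogonality stated in the excerpt), and the restriction and composition of a compact operator remains compact. Either way the claim is immediate.

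With compactness in hand, the Fredholm statements follow from the classical fact that if $K$ is a compact operator on a Banach space then $I-K$ is Fredholm of index zero (Riesz--Schauder). Concretely: $\mathcal{N}(I-B(\lambda))$ is finite-dimensional because the identity acts compactly on it; $\mathcal{R}(I-B(\lambda))$ is closed; and $\operatorname{codim}\mathcal{R}(I-B(\lambda)) = \dim\mathcal{N}(I-B(\lambda))$, so the index is zero. The same argument applies verbatim to $F_h(\lambda) = I_h - B_h(\lambda)$ on the finite-dimensional space $V_h$ (where in fact every linear operator is Fredholm of index zero by rank--nullity, so this case is trivial). One should also remark that holomorphy of $\lambda\mapsto B(\lambda)$ on $\mathbb{C}\setminus\{\sigma\}$ is clear since the coefficients $\lambda$ and $\frac{\lambda\sigma}{\lambda-\sigma}$ are holomorphic there, so $F(\lambda)$ is a holomorphic Fredholm operator function as required for Theorem \ref{Karma}; likewise for $F_h$.

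I do not expect a serious obstacle here. The only point requiring a little care is the compactness of the embedding $H^2_0(D)\hookrightarrow L^2(D)$ on a Lipschitz polygon, which is standard (Rellich--Kondrachov), and the boundedness of point evaluation, which the paper has already invoked via $H^{1+s}(D)\hookrightarrow C(D)$. One caveat is the phrase ``for $h>0$ small enough'' in the statement: for $B_h$ and $F_h$ the finite-rank / finite-dimensional argument needs no smallness of $h$ at all, so this qualifier is presumably harmless (or inherited from the requirement that $x_0$ be a mesh node, which is needed only so that $b(\cdot,\cdot;\lambda)$ is well defined on $V_h$ via the point evaluation at $x_0$). I would simply note this and proceed.
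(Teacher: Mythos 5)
Your proof is correct, but it is organized differently from the paper's. The paper does not decompose $B(\lambda)$: it takes a bounded sequence in $V$, extracts a subsequence convergent in $C(D)$ (using the compact embedding of $H_0^2(D)$ into $C(D)$), and then shows directly from the defining identity $b(u,v;\lambda)=(B(\lambda)u,v)_V$ that the image sequence is Cauchy in the energy norm, via the estimate $\Vert B(\lambda)v_{n_1}-B(\lambda)v_{n_2}\Vert_V \leq C\Vert v_{n_1}-v_{n_2}\Vert_{0}$; both the $L^2$ term and the point-evaluation term are handled at once by the $C(D)$ convergence. You instead split $B(\lambda)$ into a piece factoring through the compact embedding $V\hookrightarrow L^2(D)$ plus a rank-one piece coming from the point evaluation, and invoke the fact that a sum of compact operators is compact. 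Your decomposition makes the structure of the operator more transparent (and isolates exactly why the Dirac term is harmless: it is finite rank), while the paper's sequential argument is more self-contained and reuses the same estimate pattern as its boundedness lemma. Your observation that $B_h(\lambda)$ is trivially compact by finite-dimensionality is cleaner than the paper's ``same argument holds'' remark, and your identification $B_h(\lambda)=\Pi_h B(\lambda)|_{V_h}$ is consistent with the Galerkin orthogonality stated in the text. Both routes end with the Riesz--Schauder theorem for $I-K$ with $K$ compact, which the paper leaves implicit; your caveat that the ``$h>0$ small enough'' qualifier is not actually needed for this lemma is also a fair point.
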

\begin{proof}
	Note that given a bounded sequence $\{v_n\}_{n\in \mathbb{N}} \in V$, there is a convergent subsequence in $C(D)$ and, for simplicity, we still denote it by $\{v_{n}\}_{n\in \mathbb{N}} $. By the definition of $B(\lambda)$, we have that
\begin{eqnarray}
\nonumber \Vert B(\lambda)v_{n_1}- B(\lambda) v_{n_2} \Vert_V^2 
\nonumber			& = & (B(\lambda) v_{n_1} - B(\lambda) v_{n_2}, B(\lambda) v_{n_1} - B(\lambda) v_{n_2})_V \\
\nonumber			& = & b(v_{n_1}-v_{n_2},B(\lambda) v_{n_1} - B(\lambda) v_{n_2};\lambda) \\
\nonumber			& = &  \lambda \int_{D} \rho d (v_{n_1}-v_{n_2}) (B(\lambda) v_{n_1} - B(\lambda) v_{n_2} )dx  \\
\nonumber			& & \qquad -\frac{\lambda \sigma}{\lambda-\sigma} M (v_{n_1}-v_{n_2})(x_0)(B(\lambda) v_{n_1} - B(\lambda) v_{n_2})(x_0)  \\
\nonumber			& \leq & C(\lambda,D,\rho,d) \Vert v_{n_1}-v_{n_2} \Vert_{0} \Vert B(\lambda)v_{n_1}- B(\lambda) v_{n_2} \Vert_{0}  \\
\label{compact}			& & \qquad+ C(\lambda,\sigma,M) \Vert v_{n_1}-v_{n_2} \Vert_{0} \Vert B(\lambda)v_{n_1}- B(\lambda) v_{n_2} \Vert_{0}.
\end{eqnarray}

Since $\Vert v \Vert_{0} \leq C\Vert v \Vert_V$ for $v \in V$, it holds that
		\begin{equation}
				\Vert B(\lambda)v_{n_1}- B(\lambda) v_{n_2} \Vert_V \leq 	
			C \Vert v_{n_1}-v_{n_2} \Vert_{0} 
			 \to  0.
	\end{equation} 
Thus $\{ B(\lambda)v_n\}_{n\in \mathbb{N}}$ converges. 
Consequently, $B(\lambda)$ is compact and $F(\lambda)$ is a Fredholm operator with index zero.
The same argument holds for $B_h(\lambda)$ and $F_h(\lambda)$. 
\end{proof}

\begin{Lemma}{\label{equibounded}}
	There exists a constant $C$ such that
	$ \Vert F_h(\lambda) \Vert_{\mathcal{L}(V_h,V_h)} \leq C$ for $\lambda \in \Omega, \, h>0 .  $ 
\end{Lemma}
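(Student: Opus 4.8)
The plan is to reduce the estimate for $F_h(\lambda) = I_h - B_h(\lambda)$ to a uniform bound on $B_h(\lambda)$, since $\|I_h\|_{\mathcal{L}(V_h,V_h)} = 1$. First I would fix $\lambda \in \Omega$ and $h > 0$ and, for an arbitrary $u_h \in V_h$, test the defining relation (\ref{Bh}) with the particular choice $v_h = B_h(\lambda) u_h \in V_h$. This gives
\begin{equation*}
\|B_h(\lambda) u_h\|_V^2 = (B_h(\lambda) u_h, B_h(\lambda) u_h)_V = b(u_h, B_h(\lambda) u_h; \lambda).
\end{equation*}
Applying the continuity of $b(\cdot,\cdot;\lambda)$ on $V \times V$ established in Lemma 1 (which is legitimate here because $V_h \subset V$) and cancelling one factor of $\|B_h(\lambda) u_h\|_V$ yields $\|B_h(\lambda)\|_{\mathcal{L}(V_h,V_h)} \le C(\lambda, D, \rho, d, \sigma, M)$.

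The next step is to make this bound independent of $\lambda$ and $h$. I would inspect the proof of Lemma 1 and observe that the constant there depends on $\lambda$ only through $|\lambda|$ and $\big|\frac{\lambda\sigma}{\lambda - \sigma}\big|$ (equivalently through $\big|\frac{\lambda}{\lambda - \sigma}\big|$), each of which is a continuous function of $\lambda$ on $\mathbb{C} \setminus \{\sigma\}$. Since $\Omega$ is compact and $\sigma \notin \Omega$, the suprema of these quantities over $\Omega$ are finite, so there is a single constant $\tilde C = \tilde C(\Omega, D, \rho, d, \sigma, M)$ with $\|B_h(\lambda)\|_{\mathcal{L}(V_h,V_h)} \le \tilde C$ for every $\lambda \in \Omega$ and every $h > 0$. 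The claim then follows from $\|F_h(\lambda)\|_{\mathcal{L}(V_h,V_h)} \le \|I_h\|_{\mathcal{L}(V_h,V_h)} + \|B_h(\lambda)\|_{\mathcal{L}(V_h,V_h)} \le 1 + \tilde C =: C$.

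The only genuinely delicate point is the uniformity in $\lambda$: one must ensure that the $\lambda$-factors appearing in the bound for $b(\cdot,\cdot;\lambda)$ stay bounded, which is exactly where the hypotheses that $\Omega$ is compact and bounded away from the resonance value $\sigma$ are used. Everything else — the duality-type bound on $B_h(\lambda)$ obtained by testing against $B_h(\lambda)u_h$, and the triangle inequality for $F_h(\lambda)$ — is routine and mirrors the estimates already carried out for $B(\lambda)$ and $b(\cdot,\cdot;\lambda)$ in the preceding lemmas. (The analogous bound $\|F(\lambda)\|_{\mathcal{L}(V,V)} \le C$ on $\Omega$ follows by the same argument applied to $B(\lambda)$.)
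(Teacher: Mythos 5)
Your proof is correct and follows essentially the same route as the paper: bound $\Vert B_h(\lambda)\Vert_{\mathcal{L}(V_h,V_h)}$ via the continuity of $b(\cdot,\cdot;\lambda)$ from Lemma 1 (your choice $v_h = B_h(\lambda)u_h$ is just the paper's duality supremum specialized to the extremal element), then conclude by the triangle inequality for $F_h(\lambda)=I_h-B_h(\lambda)$. If anything, you are more careful than the paper on the one genuinely delicate point --- the paper merely asserts that compactness of $\Omega$ makes the constant independent of $\lambda$, whereas you spell out that the $\lambda$-dependence enters only through $|\lambda|$ and $\bigl|\lambda\sigma/(\lambda-\sigma)\bigr|$, which are bounded on the compact set $\Omega\subset\mathbb{C}\setminus\{\sigma\}$.
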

\begin{proof}

It can be seen from (\ref{bound}) and (\ref{Bh}) that
\begin{displaymath}
		\Vert B_h(\lambda) \Vert_{\mathcal{L}(V_h, V_h)} = \sup\limits_{u_h,v_h \in V_h} \frac{\vert (B_h(\lambda)u_h,v_h)_{V_h} \vert}{\Vert u_h \Vert_{V_h} \Vert v_h \Vert_{V_h} } = 
	\sup\limits_{u_h,v_h \in V_h} \frac{\vert b(u_h,v_h;\lambda) \vert}{\Vert u_h \Vert_{V_h} \Vert v_h \Vert_{V_h} }  \leq C(\lambda,D, \rho,d,\sigma,M).
\end{displaymath}

Since $\Omega$ is compact, for $\lambda \in \Omega, \, v_h \in V_h$, one has that
\begin{equation}
	\Vert F_h(\lambda) v_h \Vert_{V_h} = \Vert v_h - B_h(\lambda) v_h \Vert_{V_h} \leq \Vert v_h \Vert_{V_h} + \Vert B_h(\lambda) v_h \Vert_{V_h} \leq C \Vert v_h \Vert_{V_h},
\end{equation}
where $C$ is independent of $\lambda$ and $h$. 
\end{proof}

Define the linear projection operator $p_h: V \to V_h $ such that 
\begin{equation}\label{ph}
a(u-p_h u, v_h) = 0, \, \forall v_h \in V_h.
\end{equation}

\begin{Remark}
	For the biharmonic equation
	\begin{subequations}{\label{DE}}
	\begin{align}
       \Delta ^2 u &= f \quad \text{in} \quad D,\\[1mm]
       u = \frac{\partial u}{\partial n} &= 0 \quad \text{on} \quad \partial D,
        \end{align}
	\end{subequations}
	if the largest interior angle of the boundary $\partial D$ is less than $126.28^{\circ}$ and $f \in H^{-1}(D)$, then $ u \in H^3(D) \cap H_0^2(D) $\cite{14,22}. In fact, Dirac's delta function $\delta(x) \in H^{-1-\epsilon}(D)$ for any $\epsilon > 0$. Hence one can assume that $ u \in H^{3 - \epsilon}(D)$, for any $\epsilon > 0$ small enough \cite{BS2005, 15}.
\end{Remark}

\begin{Lemma}{\label{order}}
	Let $\lambda \in \Omega$ and $v \in H_0^2(D) \cap H^{3-\epsilon}(D)$ for any small $\epsilon > 0$. 
	It holds that 
	 \begin{equation}\label{Re2}
	 \Vert p_h F(\lambda)v-F_h (\lambda) p_h v \Vert_{V_h} \leq C h^{2-\epsilon-s} \Vert v \Vert_{3-\epsilon},
	 \end{equation}
	 where $s >0 $ and small enough.
\end{Lemma}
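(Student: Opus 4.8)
The plan is to convert the left-hand side of \eqref{Re2} into a duality expression and reduce it to two error estimates for the Ritz projection $p_h$. First I would use $F(\lambda)=I-B(\lambda)$, $F_h(\lambda)=I_h-B_h(\lambda)$ and $I_hp_hv=p_hv=p_h(Iv)$ to write $p_hF(\lambda)v-F_h(\lambda)p_hv=B_h(\lambda)p_hv-p_hB(\lambda)v$. Testing against $w_h\in V_h$ and using \eqref{B}, \eqref{Bh} together with \eqref{ph} (which yields $(p_hu,w_h)_V=(u,w_h)_V$) gives
\[
(B_h(\lambda)p_hv-p_hB(\lambda)v,\,w_h)_V=b(p_hv,w_h;\lambda)-b(v,w_h;\lambda)=b(p_hv-v,w_h;\lambda),
\]
so that, with $e:=v-p_hv$,
\[
\|p_hF(\lambda)v-F_h(\lambda)p_hv\|_{V_h}=\sup_{w_h\in V_h\setminus\{0\}}\frac{|b(e,w_h;\lambda)|}{\|w_h\|_{V_h}}.
\]
Next I would split $b(e,w_h;\lambda)$ as in \eqref{buvlambda} into the volume term $\lambda\int_D\rho d\,e\,w_h\,dx$ and the point term $-\frac{\lambda\sigma}{\lambda-\sigma}M\,e(x_0)w_h(x_0)$, estimating the $w_h$-factors by $\|w_h\|\le C\|w_h\|_{V_h}$ and $|w_h(x_0)|\le\|w_h\|_0\le C\|w_h\|_{1+s}\le C\|w_h\|_{V_h}$ exactly as in \eqref{bound}. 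This reduces the proof to bounding $\|e\|$ (the $L^2$-norm of the projection error) and the nodal error $|e(x_0)|$; from the approximation property of the $C^1$ BFS space and $v\in H^{3-\epsilon}(D)\cap H_0^2(D)$, C\'ea's lemma already gives $\|e\|_{V}\le Ch^{1-\epsilon}\|v\|_{3-\epsilon}$.

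For $\|e\|$ I would run an Aubin--Nitsche argument: letting $\psi\in H_0^2(D)$ solve $L\psi=e$ with clamped conditions, Remark~1 applied to $e\in L^2(D)\hookrightarrow H^{-1}(D)$ gives $\psi\in H^{3-\epsilon}(D)$ with $\|\psi\|_{3-\epsilon}\le C\|e\|$, and Galerkin orthogonality with the BFS approximation estimate yields $\|e\|^2=a(e,\psi)=a(e,\psi-\chi_h)\le\|e\|_{V}\,Ch^{1-\epsilon}\|\psi\|_{3-\epsilon}$ for a suitable $\chi_h\in V_h$, i.e. $\|e\|\le Ch^{2-2\epsilon}\|v\|_{3-\epsilon}$. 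For the nodal error I would introduce the point-load Green's function $g\in H_0^2(D)$ defined by $a(w,g)=w(x_0)$ for all $w\in V$ --- this is meaningful on all of $V=H_0^2(D)$ because $H_0^2(D)\hookrightarrow C(D)$ in two dimensions --- and, since $\delta_{x_0}\in H^{-1-\epsilon}(D)$, Remark~1 gives $g\in H^{3-\epsilon}(D)$ with $\|g\|_{3-\epsilon}\le C(D,x_0)$. Galerkin orthogonality then gives $e(x_0)=a(e,g)=a(e,g-\chi_h)$, whence $|e(x_0)|\le\|e\|_{V}\,Ch^{1-\epsilon}\|g\|_{3-\epsilon}\le Ch^{2-2\epsilon}\|v\|_{3-\epsilon}$; a sharper estimate exploiting that $g$ is smooth away from $x_0$ would replace $h^{2-2\epsilon}$ by $h^{2-\epsilon}|\ln h|^{1/2}$.

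Collecting the bounds, $|b(e,w_h;\lambda)|\le C(\lambda,D,\rho,d,\sigma,M)\,h^{2-2\epsilon}\|v\|_{3-\epsilon}\|w_h\|_{V_h}$, and the supremum over $w_h$ gives \eqref{Re2}, the small parameter $s>0$ absorbing the surplus power $\epsilon$ (equivalently the corner regularity loss, or the logarithmic factor above). I expect the nodal term $e(x_0)$ to be the main obstacle: a single point evaluation is not controlled by $\|e\|$ or $\|e\|_{V}$ directly, and the device that rescues the estimate is duality against the singular point-load Green's function, whose $H^{3-\epsilon}$-regularity is precisely what Remark~1 provides; one must also check carefully that the functional $w\mapsto w(x_0)$ is represented by $a(\cdot,g)$ on all of $H_0^2(D)$, which rests on the two-dimensional Sobolev embedding and a density argument.
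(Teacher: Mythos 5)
Your proposal is correct, and its first half coincides exactly with the paper's proof: both reduce $\Vert p_h F(\lambda)v-F_h(\lambda)p_h v\Vert_{V_h}$ by duality to $\sup_{\Vert v_h\Vert_{V_h}=1}|b(p_h v-v,v_h;\lambda)|$ using \eqref{B}, \eqref{Bh} and the definition of $p_h$. Where you diverge is in how this quantity is then bounded. The paper does it in one line: it reuses the estimate \eqref{bound}, $|b(e,v_h;\lambda)|\le C\Vert e\Vert_{1+s}\Vert v_h\Vert_{1+s}\le C\Vert e\Vert_{1+s}\Vert v_h\Vert_{V_h}$, and then invokes a lower-order-norm error estimate for the Ritz projection, $\Vert p_h v-v\Vert_{1+s}\le Ch^{(3-\epsilon)-(1+s)}\Vert v\Vert_{3-\epsilon}=Ch^{2-\epsilon-s}\Vert v\Vert_{3-\epsilon}$, citing standard references; this is where the exponent $2-\epsilon-s$ comes from. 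You instead split $b(e,\cdot;\lambda)$ into its two constituent functionals and control each by a separate Aubin--Nitsche argument: $\Vert e\Vert_{L^2}$ via the dual source problem with $L^2$ data, and $|e(x_0)|$ via the point-load Green's function $g$ with $a(w,g)=w(x_0)$, whose $H^{3-\epsilon}$ regularity is exactly what Remark~1 supplies. Your route is more self-contained and makes explicit the delicate point that the paper hides inside the embedding $H^{1+s}\hookrightarrow C(D)$ plus a fractional-norm projection estimate, namely that the nodal error $e(x_0)$ is the genuinely singular term; the price is that you obtain $h^{2-2\epsilon}$ rather than $h^{2-\epsilon-s}$, which, as you note, is the same statement after renaming the auxiliary small parameters. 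Both arguments share the same unaddressed technicalities (regularity of the dual problems for variable $R,\nu$ on a polygon, and the definability of the BFS nodal interpolant for $H^{3-\epsilon}$ functions), so neither is weaker than the other on that score.
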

\begin{proof}
Using (\ref{B}), (\ref{Bh}), (\ref{F}) and (\ref{Fh}),
\begin{eqnarray}
\nonumber |(p_h F(\lambda)v-F_h (\lambda) p_h v, v_h)_{V_h}| &  =  & |(p_h(I-B(\lambda))v-(I_h-B_h(\lambda))p_h v,v_h)_{V_h}| \\
\nonumber			& = & |(-p_h B(\lambda) v + B_h(\lambda) p_h v,v_h)_{V_h}| \\ 
\nonumber			& = & |-(B(\lambda)v, v_h)_{V_h} +(B_h(\lambda)p_h v, v_h)_{V_h}| \\
\nonumber			& = & |-b(v,v_h;\lambda) +b(p_h v, v_h;\lambda) |\\
\label{esti}			& = & |b(p_h v - v,v_h;\lambda) |,
\end{eqnarray}
where $v_h \in V_h$ and $ \Vert v_h \Vert_{V_h}=1$. In view of (\ref{bound}) and the orthogonal projection theorem \cite{31,32}, (\ref{esti}) can be written as 
\begin{eqnarray}
\nonumber |(p_h F(\lambda)v-F_h (\lambda) p_h v, v_h)_{V_h}| 
\nonumber			& \leq &  C  \Vert v_h \Vert_{V_h} \Vert p_h v-v\Vert_{1+s}  \\
\label{1+s}			& \leq &  C  \Vert p_h v-v\Vert_{1+s} \\
 & \leq & C h^{2-\epsilon-s} \Vert v \Vert_{3-\epsilon}.
\end{eqnarray}

Hence
\begin{equation}
	 \Vert p_h F(\lambda)v-F_h (\lambda) p_h v \Vert_{V_h} =\sup_{v_h \in V_h, \, \Vert v_h \Vert _{V_h}=1} 	|(p_h F(\lambda)v-F_h (\lambda) p_h v, v_h)_{V_h}| \leq C h^{2-\epsilon-s} \Vert v \Vert_{3-\epsilon}
\end{equation}
and the proof is complete.
\end{proof}

\begin{Lemma}{\label{vh}}
	Let $\lambda \in \Omega$ be fixed. For $v_h \in V_h$, 
	\[
	\Vert (F(\lambda)-F_h(\lambda))v_h \Vert_V = \Vert (B(\lambda)-B_h(\lambda))v_h \Vert_V \to 0 \quad \text{as } \quad h \to 0.
	\]
\end{Lemma}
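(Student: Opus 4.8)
The plan is to reduce the claim to an approximation estimate for the energy projection $p_h$ applied to the single function $B(\lambda)v_h$, exploiting the extra elliptic regularity of the latter. First I would invoke the Galerkin orthogonality. Since $B_h(\lambda)v_h\in V_h$ and, by \eqref{Bh}, $(B_h(\lambda)v_h,w_h)_V=b(v_h,w_h;\lambda)=(B(\lambda)v_h,w_h)_V$ for all $w_h\in V_h$, while the defining relation \eqref{ph} of $p_h$ gives $(p_hB(\lambda)v_h,w_h)_V=(B(\lambda)v_h,w_h)_V$ for all $w_h\in V_h$, uniqueness in $V_h$ forces $B_h(\lambda)v_h=p_hB(\lambda)v_h$. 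Hence, using \eqref{F}--\eqref{Fh},
\[
(F(\lambda)-F_h(\lambda))v_h=(B(\lambda)-B_h(\lambda))v_h=(I-p_h)B(\lambda)v_h ,
\]
so it suffices to bound $\Vert (I-p_h)B(\lambda)v_h\Vert_V$.

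Next I would establish the regularity of $w:=B(\lambda)v_h$. By \eqref{B}, $w\in V=H^2_0(D)$ solves $a(w,\cdot)=b(v_h,\cdot;\lambda)$, and from \eqref{buvlambda} together with the estimates in the proof leading to \eqref{bound} the right-hand side $b(v_h,\cdot;\lambda)$ is a bounded linear functional on $H^{1+s}(D)$, i.e.\ an element of $H^{-1-s}(D)\subset H^{-1}(D)$ (the point-load term contributes $v_h(x_0)\,\delta(\cdot-x_0)\in H^{-1-\epsilon}(D)$), with norm controlled by $C(\lambda,D,\rho,d,\sigma,M)\Vert v_h\Vert_V$ thanks to $H^2_0(D)\hookrightarrow H^{1+s}(D)$ and the uniform bound $|\lambda-\sigma|\ge\delta>0$ on the compact set $\Omega$. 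Invoking the elliptic regularity recalled in the Remark (the interior-angle hypothesis on $\partial D$ is in force), one obtains $w\in H^{3-\epsilon}(D)\cap H^2_0(D)$ with
\[
\Vert B(\lambda)v_h\Vert_{3-\epsilon}\le C(\lambda,D,\rho,d,\sigma,M)\,\Vert v_h\Vert_V .
\]

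Finally I would apply the standard interpolation error estimate for the $C^1$ BFS element \cite{12} together with the quasi-optimality of the energy projection, $\Vert (I-p_h)w\Vert_V\le C\inf_{\chi\in V_h}\Vert w-\chi\Vert_V\le C h^{1-\epsilon}\Vert w\Vert_{3-\epsilon}$ for $w\in H^{3-\epsilon}(D)\cap H^2_0(D)$. Chaining the three steps yields
\[
\Vert (F(\lambda)-F_h(\lambda))v_h\Vert_V=\Vert (I-p_h)B(\lambda)v_h\Vert_V\le C h^{1-\epsilon}\Vert v_h\Vert_V\longrightarrow 0 ,
\]
with $C$ independent of $h$, so the convergence holds for any family $\{v_h\}$ bounded in $V$ (in particular $\Vert v_h\Vert_{V_h}=\Vert v_h\Vert_V$). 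The main obstacle is the middle step: one must verify that the singular point-load part of $b(v_h,\cdot;\lambda)$ costs only a single derivative of regularity, so that $w\in H^{3-\epsilon}$ rather than merely $H^3$, and, crucially, that the resulting bound on $\Vert B(\lambda)v_h\Vert_{3-\epsilon}$ is uniform in $h$; this is precisely where the geometric condition on $D$ and the continuity of the solution operator from $H^{-1-\epsilon}(D)$ to $H^{3-\epsilon}(D)$ are used. The remaining steps are routine.
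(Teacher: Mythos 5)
Your proposal is correct and its core is the same reduction the paper uses: Galerkin orthogonality identifies $(B(\lambda)-B_h(\lambda))v_h$ with $(I-p_h)B(\lambda)v_h$ (the paper gets the inequality $\Vert (B(\lambda)-B_h(\lambda))v_h\Vert_V\le\Vert (I-p_h)B(\lambda)v_h\Vert_V$ by inserting $(B_h(\lambda)-p_hB(\lambda))v_h\in V_h$ into the second slot and applying Cauchy--Schwarz, whereas you observe the cleaner exact identity $B_h(\lambda)v_h=p_hB(\lambda)v_h$). Where you genuinely diverge is in closing the limit. The paper simply invokes ``the property of the projection operator'' to conclude $\Vert (I-p_h)(B(\lambda)v_h)\Vert_V\to 0$; strictly speaking that property gives convergence only for a \emph{fixed} element of $V$, while here the target $B(\lambda)v_h$ moves with $h$ (and the lemma is later applied to $h$-dependent sequences in the proof of Theorem~2). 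You repair this by proving a uniform quantitative bound: elliptic regularity for $w=B(\lambda)v_h$ (right-hand side in $H^{-1-\epsilon}$ because of the point load) gives $\Vert w\Vert_{3-\epsilon}\le C\Vert v_h\Vert_V$ with $C$ independent of $h$, and the BFS approximation estimate then yields $\Vert (I-p_h)w\Vert_V\le Ch^{1-\epsilon}\Vert v_h\Vert_V$. This buys an explicit rate and makes the statement valid uniformly over bounded families $\{v_h\}$, at the price of invoking the shift theorem and the interior-angle restriction on $D$ from the Remark. An alternative, regularity-free way to close the same gap (closer in spirit to the paper's qualitative phrasing) is to use the compactness of $B(\lambda)$ established in Lemma~2: for $\{v_h\}$ bounded, $\{B(\lambda)v_h\}$ is precompact in $V$, and $I-p_h\to 0$ pointwise with uniformly bounded norms, hence uniformly on compact sets. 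Either route is sound; yours is the more informative of the two.
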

\begin{proof}
    From (\ref{B}) and (\ref{Bh}), we have $ ((B(\lambda)-B_h(\lambda))u_h,v_h)_V =0$ for all $u_h,\, v_h \in V_h$ and
\begin{eqnarray*}
\Vert (B(\lambda)-B_h(\lambda))v_h \Vert_V^2 & = & ((B(\lambda)-B_h(\lambda))v_h,(B(\lambda)-B_h(\lambda))v_h)_V \\
			& = &  ((B(\lambda)-B_h(\lambda))v_h,(B(\lambda)-p_h B(\lambda))v_h)_V\\
			& \leq & 	\Vert (B(\lambda)-B_h(\lambda))v_h \Vert_V \, \Vert (I-p_h)(B(\lambda)v_h) \Vert_V. 
\end{eqnarray*}
Using the property of the projection operator $p_h$, it holds that
	\begin{equation}
		\Vert (B(\lambda)-B_h(\lambda))v_h \Vert_V \leq \Vert (I-p_n)(B(\lambda)v_h) \Vert_V \to 0.
	\end{equation}
\end{proof}

Let $\Omega_0 \subseteq \Omega$ be compact such that $\Gamma:=\partial \Omega_0 \subseteq \rho(F)$ and 
	$  \Omega_0 \cap \sigma (F) = \{ \lambda_0 \} $. 
The consistency error of the finite element method $\epsilon_h$ is simply
	\begin{equation}{\label{epsil}}
		\epsilon_h = \sup_{\eta \in \Gamma} \max_{g \in G(F,\lambda_0),\, \Vert g \Vert_V=1} \Vert F_h(\eta)p_h g - p_h F(\eta)g \Vert_V ,
	\end{equation}
where $G(F,\lambda_0)$ denotes the generalized eigenspace for $F(\cdot)$ at $\lambda_0$.

\begin{Theorem}
		 Let $\lambda_0$ be an eigenvalue of $F$ and $h$ be small enough.
If $\lambda_0$ is semi-simple,
\begin{equation}
			\vert \lambda_h - \lambda_0 \vert  \leq C \epsilon_h \leq C h^{2-\epsilon-s}, \, \forall \lambda_h \in \sigma(F_h) \cap \Omega_0,
		\end{equation} 
and if $\lambda_0 $ is not semi-simple,
	\begin{equation}
		\vert \lambda_h - \lambda_0 \vert  \leq C h^{(1-s)/\kappa}, \, \forall \lambda_h \in \sigma(F_h) \cap \Omega_0,
	\end{equation} 
where $\epsilon \to 0_+$, $s \to 0_+$  and $ \kappa = \kappa(F,\lambda_0)$. 
\end{Theorem}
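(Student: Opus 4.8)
The plan is to obtain both bounds as a corollary of Theorem~\ref{Karma}, applied with $V_n:=V_h$ carrying the restriction of $\|\cdot\|_V$, with $F_n:=F_h$, and with the connecting maps $p_n:=p_h$ of \eqref{ph}. Since $p_h$ is the $a(\cdot,\cdot)$-orthogonal (Ritz) projection onto $V_h$, one has $\|p_h u\|_{V_h}\le\|u\|_V$, while the density of $\bigcup_h V_h$ in $H^2_0(D)$ and the BFS approximation property give $\|u-p_h u\|_V\to 0$; hence $\|p_h u\|_{V_h}\to\|u\|_V$ for all $u\in V$, so $\mathcal P=\{p_h\}$ is admissible, and $\rho(F)\ne\emptyset$ because $\sigma(F)$ is discrete in $\Omega$ by holomorphy (with \eqref{1} having a countable real spectrum, \cite{17}). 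Because $V_h\subset V$ carries the same norm, $\mathcal P$-convergence in $V_h$ is just convergence in $V$ and $\mathcal P$-compactness is relative compactness in $V$; this identification is used throughout. It then remains to (i) verify (A1)--(A4), (ii) estimate the consistency quantity $\epsilon_h$ of \eqref{epsil}, and (iii) feed the estimate into Theorem~\ref{Karma}.

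For step (i): (A1) is exactly the statement that $F_h(\lambda)$ is Fredholm of index zero, proved in the Lemma immediately preceding Lemma~\ref{equibounded}, and (A2) is Lemma~\ref{equibounded}. For (A3), a short computation using $F=I-B(\lambda)$ and $F_h=I_h-B_h(\lambda)$ gives $F_h(\lambda)p_h u-p_h F(\lambda)u=p_h B(\lambda)u-B_h(\lambda)p_h u$, which splits as $-(I-p_h)B(\lambda)u+B(\lambda)(u-p_h u)+(B(\lambda)-B_h(\lambda))p_h u$; the first term tends to $0$ since $B(\lambda)u\in V$, the second since $B(\lambda)$ is bounded and $\|u-p_h u\|_V\to 0$, and the third is at most $\|(I-p_h)B(\lambda)p_h u\|_V$ by the Galerkin orthogonality (as in the proof of Lemma~\ref{vh}) and tends to $0$ because $B(\lambda)p_h u\to B(\lambda)u$ in $V$ (equivalently, compactness of $B(\lambda)$ and $p_h\to I$ strongly force $\|(I-p_h)B(\lambda)\|_{\mathcal L(V)}\to 0$). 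For (A4), let $\{x_h\}$ be bounded in $V$ with $\{F_h(\lambda)x_h\}$ $\mathcal P$-compact; from $x_h=F_h(\lambda)x_h+B_h(\lambda)x_h$ it suffices to show $\{B_h(\lambda)x_h\}$ is relatively compact in $V$. This is the uniform-in-$h$ analogue of the compactness argument in the Lemma before Lemma~\ref{equibounded}: the inequality proved there yields $\|B_h(\lambda)(v_h-w_h)\|_V\le C\|v_h-w_h\|_{C(D)}$, and a bounded sequence in $V=H^2_0(D)$ has a Cauchy subsequence in $C(D)$ by the compact embedding $H^2(D)\hookrightarrow\hookrightarrow C(D)$; along that subsequence $\{B_h(\lambda)x_h\}$ is Cauchy, hence convergent, in $V$.

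For steps (ii)--(iii): choose $\Omega_0\subseteq\Omega$ with $\Gamma=\partial\Omega_0\subseteq\rho(F)$ and $\Omega_0\cap\sigma(F)=\{\lambda_0\}$, possible by discreteness of $\sigma(F)$. By Lemma~\ref{order}, $\|F_h(\eta)p_h g-p_h F(\eta)g\|_V\le C h^{2-\epsilon-s}\|g\|_{3-\epsilon}$ uniformly in $\eta\in\Gamma$, for any $g\in H^2_0(D)\cap H^{3-\epsilon}(D)$, so it only remains to check $G(F,\lambda_0)\subset H^{3-\epsilon}(D)$, for then $\max_{g\in G(F,\lambda_0),\,\|g\|_V=1}\|g\|_{3-\epsilon}$ is a finite constant $C_0$ and $\epsilon_h\le CC_0\,h^{2-\epsilon-s}$. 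An eigenelement $g_0$ solves $Lg_0=\lambda_0\rho d\,g_0+\frac{\lambda_0\sigma}{\sigma-\lambda_0}M g_0(x_0)\delta(\cdot-x_0)$ with clamped data, whose right-hand side lies in $H^{-1-\epsilon}(D)$, so the Remark gives $g_0\in H^{3-\epsilon}(D)$. For a chain element $g_j$ the Jordan relation reads $g_j-B(\lambda_0)g_j=\sum_{i=1}^{j}\frac{1}{i!}B^{(i)}(\lambda_0)g_{j-i}$, where each $B^{(i)}(\lambda_0)v$ solves a biharmonic-type problem with right-hand side of the form $c\,\rho d\,v+c'\,v(x_0)\delta(\cdot-x_0)\in H^{-1-\epsilon}(D)$; since every $g_{j-i}\in V\hookrightarrow C(D)$ a priori, each such term lies in $H^{3-\epsilon}(D)$, and since $g_j\in V\hookrightarrow C(D)$ as well, $B(\lambda_0)g_j$ does too, whence $g_j\in H^{3-\epsilon}(D)$. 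Feeding $\epsilon_h\le C h^{2-\epsilon-s}$ into the bound $|\lambda_h-\lambda_0|\le c\,\epsilon_h^{1/\kappa}$ of Theorem~\ref{Karma} then concludes: if $\lambda_0$ is semi-simple then $\kappa=\kappa(F,\lambda_0)=1$ and $|\lambda_h-\lambda_0|\le C\epsilon_h\le Ch^{2-\epsilon-s}$; otherwise $\kappa\ge 2$ and $|\lambda_h-\lambda_0|\le Ch^{(2-\epsilon-s)/\kappa}\le Ch^{(1-s)/\kappa}$ for $0<h<1$ and $\epsilon$ small.

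I expect the two delicate points to be the verification of the regularity property (A4) --- which rests on a BFS-uniform version of the compact embedding $H^2\hookrightarrow\hookrightarrow C$, so that the operators $B_h(\lambda)$ are discretely compact --- and the elliptic-regularity claim that the \emph{whole} generalized eigenspace $G(F,\lambda_0)$, not merely the eigenelements, lies in $H^{3-\epsilon}(D)$; the step up the Jordan chain, using that every chain element is already in $H^2_0(D)\hookrightarrow C(D)$ so the Dirac terms on the right-hand side are controlled, is the heart of that argument. The remaining steps are routine applications of Lemmas~\ref{equibounded}, \ref{order}, \ref{vh} and of Theorem~\ref{Karma}.
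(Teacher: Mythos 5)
Your overall strategy coincides with the paper's: verify (A1)--(A4) of Theorem~\ref{Karma}, bound the consistency error $\epsilon_h$ via Lemma~\ref{order}, and read off the two rates from $|\lambda_h-\lambda_0|\le c\,\epsilon_h^{1/\kappa}$ (with $h^{(2-\epsilon-s)/\kappa}\le h^{(1-s)/\kappa}$ for $h<1$ absorbing the $\epsilon$ in the defective case, exactly as the paper implicitly does). The genuine divergence is in (A4). The paper proves regularity by inverting $F(\lambda)$ --- on all of $V$ when $\lambda\in\rho(F)$, and on $V/\mathcal N(F(\lambda))$ when $\lambda\in\sigma(F)$ --- setting $v=F(\lambda)^{-1}y$ and estimating $\|v_{h}-p_{h}v\|_V$ through a telescoping identity that invokes Lemmas~\ref{order} and~\ref{vh}. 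You instead write $x_h=F_h(\lambda)x_h+B_h(\lambda)x_h$ and establish collective (discrete) compactness of the family $\{B_h(\lambda)\}$, which is the more standard and, to my mind, cleaner route: it avoids the case split on $\lambda\in\rho(F)$ versus $\lambda\in\sigma(F)$ and avoids the uniformity issue lurking in the paper's use of Lemma~\ref{vh} on an $h$-dependent argument. You also supply something the paper only asserts: the regularity $G(F,\lambda_0)\subset H^{3-\epsilon}(D)$ for the \emph{whole} generalized eigenspace, bootstrapped up the Jordan chain using $g_{j-i}\in H^2_0(D)\hookrightarrow C(D)$ so that the Dirac right-hand sides stay in $H^{-1-\epsilon}(D)$.

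Two small repairs are needed in your (A4). First, you assume $\{x_h\}$ is bounded in $V$; this is part of Karma's definition of regular convergence but is not stated in the paper's (A4), so it should be invoked explicitly. Second, your Cauchy argument compares $B_{h_n}(\lambda)x_{h_n}$ and $B_{h_m}(\lambda)x_{h_m}$, which are images under \emph{different} operators, so the estimate $\|B_h(\lambda)w_h\|_V\le C\|w_h\|_{C(D)}$ does not apply directly to the difference. The fix is the Galerkin identity $B_h(\lambda)|_{V_h}=p_hB(\lambda)|_{V_h}$: compactness of $B(\lambda)$ gives a subsequence with $B(\lambda)x_{h_n}\to z$ in $V$, and then $\|B_{h_n}(\lambda)x_{h_n}-p_{h_n}z\|_V\le\|B(\lambda)x_{h_n}-z\|_V\to 0$, which is exactly $\mathcal P$-compactness of $\{B_h(\lambda)x_h\}$. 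With these two adjustments your argument is complete and, on the point of the generalized eigenspace, more complete than the paper's.
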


\begin{proof}
Due to Lemmas 2, 3, and 4, we only need to verify (A4) in Theorem 1, 
namely, the sequence $\{F_h(\lambda)\}$ is regular for every $\lambda \in \Omega_0$. 
Let $\{h_n \}$ be a positive number sequence which goes to $0$ monotonically when $n \in \mathbb{N} \rightarrow \infty$. 
Consider an arbitrary subsequence $\mathbb{N}' \subseteq \mathbb{N}$  and $\{ v_{h_{n'}}\} \subset \{ v_{h_n}\}, \, n' \in \mathbb{N}' $. 
According to the definition of $\mathcal{P}$-compactness, for any $\lambda$, 
there exists a subsequence $\{ F_{h_{n''}} (\lambda) v_{h_{n''}}\}, \, n'' \in \mathbb{N}'' \subseteq \mathbb{N}' $ and some $y \in V $ such that $ \Vert F_{h_{n''}}(\lambda)v_{h_{n''}} - p_{h_{n''}} y \Vert_V \to 0 $. 
The goal is to show the existence of some $v \in V$ such that  $\Vert v_{h_{n''}} - p_{h_{n''}} v \Vert_V \rightarrow 0$ as $n'' \in \mathbb{N}'' \to \infty$. 		\begin{enumerate}
		\item If $\lambda \in \rho (F),$ then $ F(\lambda)^{-1}$ exists and is bounded. Letting $v= F(\lambda)^{-1} y,$ due to Lemma~\ref{vh} and Lemma~\ref{order}, we have that
		\begin{eqnarray*}
			&&\Vert v_{h_{n''}} -p_{h_{n''}} v \Vert_V \\& = & \Vert F(\lambda)^{-1}[(F(\lambda)-F_{h_{n''}}(\lambda))(v_{h_{n''}}-p_{h_{n''}} v)+F_{h_{n''}}(\lambda)(v_{h_{n''}} - p_{h_{n''}} v)] \Vert_V \\
			& = & \Vert F(\lambda)^{-1}[(F(\lambda)-F_{h_{n''}}(\lambda))(v_{h_{n''}}-p_{h_{n''}} v) + F_{h_{n''}}(\lambda) v_{h_{n''}} - p_{h_{n''}} y   \\ 
			& & \qquad +p_{h_{n''}} F(\lambda) v -F_{h_{n''}}(\lambda) p_{h_{n''}} v ]\Vert_V \\
			& \leq & \Vert F(\lambda)^{-1} \Vert_V (\Vert (F(\lambda)-F_{h_{n''}}(\lambda))(v_{h_{n''}}-p_{h_{n''}} v) \Vert_V   \\
			& & \qquad +  \Vert F_{h_{n''}}(\lambda) v_{h_{n''}}  - p_{h_{n''}} y \Vert_V  +\Vert p_{h_{n''}} F(\lambda) v - F_{h_{n''}}(\lambda) p_{h_{n''}} v \Vert_V ) \\
			& \to & 0.
		\end{eqnarray*}

	\item  If $\lambda \in \sigma(F)$, ${\mathcal N}(F(\lambda)) $ is finite-dimensional since $F(\lambda)$ is Fredholm. In fact,
	\begin{equation}
		\Vert F(\lambda) v_{h_{n''}} - y \Vert_V = \Vert F(\lambda) v_{h_{n''}} - F_{h_{n''}}(\lambda) v_{h_{n''}} +F_{h_{n''}}(\lambda) v_{h_{n''}} - p_{h_{n''}} y + p_{h_{n''}} y -y \Vert_V \to 0, 
	\end{equation}
	and thus $ y \in {\mathcal R}(F(\lambda))$ because ${\mathcal R}(F(\lambda))$ is closed. 
	$F(\lambda)$ is invertible as a mapping from ${V/ {\mathcal N}(F(\lambda))}$ to ${\mathcal R}(F(\lambda))$ 
	and $F(\lambda)^{-1}$ is well-defined. Let $v \in {V/ {\mathcal N}(F(\lambda))} $ such that $F(\lambda)v =  y$.
	Then $\Vert v_{h_{n''}} - p_{h_{n''}} v \Vert_V \rightarrow 0$ can be proved as before.  
\end{enumerate}

Next we consider the consistency error $\epsilon_h$. The generalized eigenspace $G(F,\lambda_0)$ is finite dimensional since 
$F(\lambda_0)$ is a Fredholm operator. 
If $\lambda_0$ is semi-simple, for $g \in G(F,\lambda_0)$, we have $g \in H_0^2(D)\cap H^{3-\epsilon}(D)$ and $\kappa=1$. 
Combining (\ref{epsil}) and Lemma \ref{order},  we obtain that
 	\begin{equation}{\label{eps}}
 	\epsilon_h = \sup_{\eta \in \Gamma} \max_{g \in G(F,\lambda_0),\, \Vert g \Vert_V=1} \Vert F_h(\eta)p_h g - p_h F(\eta)g \Vert_V  \leq C h^{2-\epsilon-s},
 \end{equation}
where $\epsilon$ and $s$ are small enough. Especially, letting $ \epsilon \to 0 $, we obtain 
 	\begin{equation}
	\vert \lambda_h - \lambda_0 \vert  \leq C h^{2-s},\, \forall s \to 0_+ .
\end{equation} 
If $\lambda_0$ is not semi-simple, the convergence is given by
 	\begin{equation}
 	\vert \lambda_h - \lambda_0 \vert  \leq C h^{(1-s)/\kappa},\, \forall s \to 0_+.
 \end{equation} 
The proof is complete.

\end{proof}

The rest of this section is devoted to the spectral indicator method (SIM) to compute the eigenvalues of $F_h(\lambda)$ \cite{25, huang2018}.
Without loss of generality, let $\Omega \in \mathbb{C} $ be a square and $\Gamma:=\partial \Omega \subset \rho(F_h)$. 
Denote by $\aleph_h: V_h \to V_h$ the spectral projection 
\begin{equation}{\label{spectral operator}}
	\aleph_h = \frac{1}{2 \pi i} \int_{\Gamma} F_h(\eta)^{-1} d\eta.
\end{equation}
The matrix form of $F_h(\eta)$ is given by
\begin{equation}
	F_h(\eta) = A_h -  \eta B_h +\frac{\eta \sigma}{\eta - \sigma} C_h,
\end{equation}
where $A_h, B_h$, and $C_h$ are, respectively, the matrices corresponding to
\[
\int_{D} R(x) [ (\partial_{xx} u_h +\partial_{yy} u_h  )(\partial_{xx} v_h +\partial_{yy} v_h) + (1-\nu )(2 \partial_{xy} u_h \partial_{xy} v_h - \partial_{xx} u_h \partial_{yy} v_h -\partial_{yy} u_h \partial_{xx} v_h)] dx,
\]
$\int_{D} \rho d u_h v_h dx$, and $M u_h(x_0)v_h(x_0)$.

If $\Gamma$ encloses no eigenvalues of $F_h$, the integral $ \int_{\Gamma} F_h(\eta)^{-1} ds$ should be zero as well as $\aleph_h(\vec{y}_h)$ for any vector $\vec{y}_h $ in $V_h$. In practice, one selects a random vector $\vec{y}_h \in V_h$ and solves $\vec{x}_h(\eta_i) \in V_h$ for $F_h(\eta_i) \vec{x}_h(\eta_i) = \vec{y}_h$. Gaussian quadrature can be used to approximate (\ref{spectral operator})
\begin{equation}
	\mathcal{I}_\Omega := \left\vert \frac{1}{2 \pi i} \sum_{i=1}^{m} w_i \vec{x}_h(\eta_i) \right\vert, 
\end{equation}
where $m$ is the number of quadrature nodes and $w_i, \, \eta_i $ represent the weights and corresponding nodes. 
One decides whether $\Omega$ contains eigenvalues using $\mathcal{I}_\Omega$. 
This is done in SIM by setting a threshold $\alpha$. If $\mathcal{I}_\Omega $ is less than $\alpha$, $\Omega$ contains no eigenvalues. 
Otherwise, one uniformly splits $\Omega$ into four squares $\Omega_1, \Omega_2, \Omega_3, \Omega_4$ and compute 
$ \mathcal{I}_{\Omega_1}, \, \mathcal{I}_{\Omega_2}, \, \mathcal{I}_{\Omega_3}, \,\mathcal{I}_{\Omega_4}$ correspondingly. 
The procedure continues until the size of the square is less than a given precision $\beta$. The center of the square is the
computed eigenvalues of $F_h(\cdot)$. We refer the reader to \cite{30, 24, Xiao2021JSC} for the details of the algorithm.

\section{Numerical Examples}
We present some numerical results using
a series of uniformly refined meshes $\{\mathcal{T}_{h_i}\}$, where $h_i (=h_{i-1}/2)$ is the mesh size.
To measure the convergence order, we use the relative error
\begin{equation}
  Rel.Err= \frac{\vert \lambda_{h_i} -\lambda_{h_{i-1}} \vert }{\lambda_{h_i}},
\end{equation}  
where $\lambda_{h_i}$ is an eigenvalue computed on mesh $\mathcal{T}_{h_i}$. 
For all examples, $R, \nu, \rho, d$ are constants and (\ref{Lux})-\eqref{Bux} can be written as
\begin{subequations}
	\begin{align}
	\Delta^2 u(x)&=\lambda \frac{\rho d}{R} u +  \sum_{j=1}^p \frac{\lambda \sigma_j}{R(\sigma_j - \lambda)} M_j \delta(x-x_j)u, \, x \in D,\\[1mm]
	u(x)&=\frac{\partial u}{\partial n} =0,\, x \in \partial D.
	\end{align}
\end{subequations}
%

\begin{Example}
	$\boldsymbol{p=1 \textrm{ \bf for a rectangular domain}}$ 
\end{Example}
Let $D=[0,1] \times [0,1], \,R=1, \, \rho d = 1, \, M =0.01, \, K=100, \, \sigma = 10000, \, x_0 = (9/26,19/26)^T $ as in \cite{17},
i.e., 
	\begin{subequations}{\label{ex1}}
	\begin{align}
		\Delta^2 u(x)&=\lambda u +\frac{100\lambda}{10000-\lambda} \delta(x-x_0) u  , \,  x \in D,\\[1mm]
		u(x)&=\frac{\partial u}{\partial n} =0, \, x \in \partial D.
	\end{align}
\end{subequations}
%

Fig.~\ref{Err1} shows the relative errors v.s. the degrees of freedom for the first five eigenvalues. 
In Table.~\ref{rect}, we show the first three eigenvalues and convergence orders.
The convergence orders validate the theory, namely, at least 2 but no more than 4. 
The eigenvalues are consistent with those in \cite{17}.
\begin{figure}[h]
		\small
	\centering
	\includegraphics[scale=0.6]{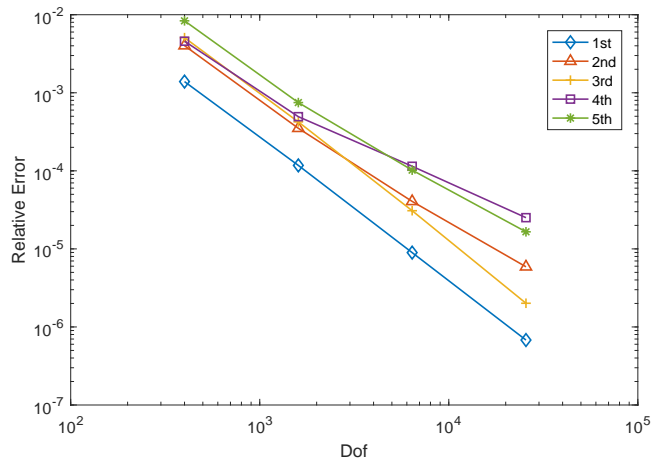}
	\caption{ Relative errors of first five eigenvalues for (\ref{ex1}). }
	{\label{Err1}}
\end{figure}
	\begin{table}[h]
		\centering
		\begin{tabular}{|c|c|c|c|c|c|c|}
			\hline
			$h$ &$\lambda_h(1st)$    & $order$ &$\lambda_h(2nd)$    & $order$ &$\lambda_h(3rd)$    & $order$  \\
			\hline
			1/5    &1273.40439&- &4847.23905   &- &5416.51831  &-\\
			1/10   &1271.63622 &-   &4827.85974    &-&5389.11595   &-\\
			1/20   &1271.48701   &3.57   &4826.16599 &3.52 &5386.83431   &3.59\\
			1/40   &1271.47561   &3.71 &4825.97011    &3.11  &5386.66812   &3.78 \\
			1/80   &1271.47475  &3.72  &4825.94165   &2.78 &5386.65729   &3.94\\
			\hline
		\end{tabular}
		\caption{Example 1: The first three eigenvalues and convergence orders.}
			{\label{rect}}
	\end{table}

\begin{Example}
	$\boldsymbol{p=1 \textrm{ \bf for an L-shaped domain}}$ 
\end{Example}
Consider an L-shaped domain $D = [-1 ,1] ^2 \setminus (0,1] \times [-1,0) $, which has a reentrant corner. Let $  M =0.01, \, K=20, \, \sigma = 2000, \, x_0 = (1/2,1/2)^T $. The other parameters are the same as the previous example. Fig.~\ref{ErrL} shows the relative errors v.s. the degrees of freedom for the first five eigenvalues. Table.~\ref{Lshape} shows  the first three eigenvalues and convergence orders. As expected, the convergence orders are lower than the previous example due to the reentrant corner. 
\begin{figure}[h]
	\small
	\centering
	\includegraphics[scale=0.6]{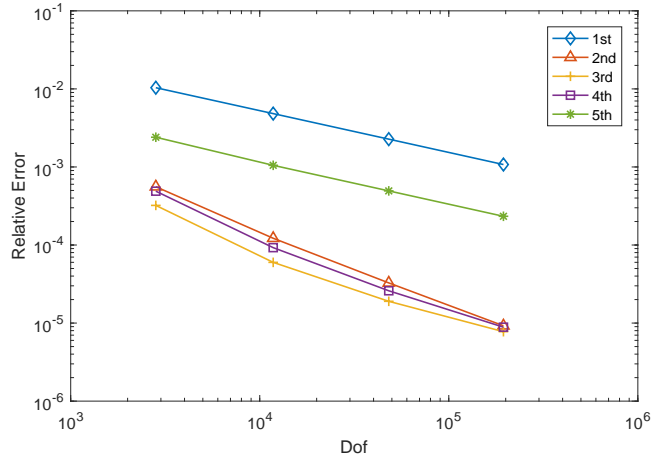}
	\caption{ Relative errors of first five eigenvalues for L-shaped domain.}
	{\label{ErrL}}
\end{figure}
\begin{table}[h]
	\centering
	\begin{tabular}{|c|c|c|c|c|c|c|}
		\hline
		$h$ &$\lambda_h(1st)$    & $order$ &$\lambda_h(2nd)$    & $order$ &$\lambda_h(3rd)$    & $order$  \\
		\hline
		1/8    &426.53874  &-  &678.31742    &- &902.57169   &-\\
		1/16   &422.16617   &-   &677.93923    &-&902.28136   &-\\
		1/32   &420.13893   &1.10   &677.85652 &2.19 &902.22727   &2.42\\
		1/64   &419.18665   &1.09 &677.83435    &1.90  &902.21013   &1.66 \\
		1/128   &418.73687  &1.08  &677.82809   &1.82 &902.20313   &1.29\\
		\hline
	\end{tabular}
	\caption{Example 2: The first three eigenvalues and convergence orders.}
	{\label{Lshape}}
\end{table}

\begin{Example}
 $\boldsymbol{p \neq 1}$ 
\end{Example}
Now we consider the case when $p = 2$, i.e., two added masses. Let $D = [0,1] \times [0,1] $ and
set $ \rho d = 1, \, M_1 =0.01, \, K_1=20, \, \sigma_1 = 2000, \, x_1 = (0.4,0.2)^T, \, M_2 =0.01, \, K_2=40, \, \sigma_2 = 4000, \, x_2 = (0.8,0.8)^T$. 
The equation is given by
	\begin{subequations}{\label{ex2}}
		\begin{align}
		\Delta^2 u(x)&=\lambda u +\frac{20\lambda}{2000-\lambda} \delta(x-x_1) u 
		 +\frac{40\lambda}{4000-\lambda} \delta(x-x_2)  u  , \,  x \in D, \\[1mm]
		u(x)&=\frac{\partial u}{\partial n} =0,\, x \in \partial D.
	\end{align}
\end{subequations}
%

In Fig.~\ref{Err2}, the relative errors v.s. the degrees of freedoms are shown. Table.~\ref{rect2} lists the first three eigenvalues and convergence orders.
Again, the convergence orders are between $2$ and $4$.

\begin{figure}[h]
	\small
	\centering
	\includegraphics[scale=0.6]{}
	\caption{ Relative errors of first five eigenvalues for (\ref{ex2}). }
	{\label{Err2}}
\end{figure}

\begin{table}[h]
	\centering
	\begin{tabular}{|c|c|c|c|c|c|c|}
		\hline
		$h$ &$\lambda_h(1st)$    & $order$ &$\lambda_h(2nd)$    & $order$ &$\lambda_h(3rd)$    & $order$  \\
		\hline
		1/5    &1969.78685  &-  &3713.12915    &- &5437.59842   &-\\
		1/10   &1967.40653   &-   &3706.51164   &-&5399.91834   &-\\
		1/20   &1966.83124   &2.05   &3704.49126 &1.71 &5397.30692   &3.85\\
		1/40   &1966.69214   &2.05 &3703.99197    &2.02  &5397.13054   &3.89 \\
		1/80   &1966.65774  &2.02  &3703.86826   &2.01 &5397.11834   &3.85\\
		\hline
	\end{tabular}
	\caption{Example 3: The first three eigenvalues and corresponding convergence orders.}
	{\label{rect2}}
\end{table}

\section{Conclusion}
In the paper, we develop a new numerical method for the nonlinear eigenvalue problems associated to the vibrations of the plate-spring-load system. 
The problem is formulated as the eigenvalue problem of a holomorphic Fredholm operator function, which is discretized using the BFS element.
The spectral indicator method is then employed to compute the eigenvalues.
The convergence of the discrete eigenvalues is proved and validated by several numerical examples. 


The proposed method is effective for problems with nonlinear dependence on the eigen-parameter, which
has been successfully used to compute the Dirichlet eigenvalues, the transmission eigenvalues, 
and the band structures of photonic crystals \cite{30, 24, Xiao2021JSC}. 
In the future, we plan to extend the method to treat damped vibrations. Extension of the method to treat 3D problems on general domains
is another interesting topic.

\end{document}